\newcommand{\mc}{\mathcal}
\newcommand{\tr}{\textup{Tr}}
\newcommand{\ra}{\rangle}
\newcommand{\la}{\langle}
\newcommand{\F}{{\mathbb F}}
\newcommand{\cP}{{\mathcal P}}
\newcommand{\cM}{{\mathcal M}}
\newcommand{\cA}{{\mathcal A}}
\newcommand{\bO}{{\mathbf O}}
\newcommand{\bS}{{\mathbf S}}
\newcommand{\bU}{{\mathbf U}}
\newcommand{\GL}{\textup{GL}}
\newcommand{\SL}{\textup{SL}}
\newcommand{\SU}{\textup{SU}}
\newtheorem{thm}{Theorem}
\newtheorem{lemma}[thm]{Lemma}
\numberwithin{equation}{section}
\numberwithin{thm}{section}
\numberwithin{table}{section}
\newtheorem{remark}[thm]{Remark}
\renewcommand{\paragraph}{\roman{paragraph}}
\begin{document}

\title{A new family of $(q^4+1)$-tight sets with an automorphism group $F_4(q)$}
\date{}
\author{ Tao~Feng$^\ast$, Weicong~Li, Qing~Xiang$^\dagger$}
\thanks{$^\ast$Research partially supported by National Natural Science Foundation of China under Grant No. 12171428, 12225110}
\thanks{$^\dagger$Research partially supported by the National Natural Science Foundation of China Grant No. 12071206, 12131011, 12150710510, and the Sino-German Mobility Programme M-0157}

\address{Tao Feng, School of Mathematical Sciences,   Zhejiang University, Hangzhou, China.
}\email{tfeng@zju.edu.cn;}

\address{Weicong Li,  Department of Mahtematics, School of Sciences, Great Bay University, Dongguan, China.}
\email{liwweicong@gbu.edu.cn;}
\address{Qing Xiang,  Department of Mathematics and Shenzhen International Center for Mathematics, Southern University of Sicence and Technolohgy, ShenZhen, China.}
\email{xiangq@sustech.edu.cn.}
\maketitle

\begin{abstract}
In this paper, we construct a new family of $(q^4+1)$-tight sets in $Q(24,q)$ or $Q^-(25,q)$ according as $q=3^f$  or $q\equiv 2\pmod 3$. The novelty of the construction is the use of the action of the exceptional simple group $F_4(q)$ on its minimal module over $\F_q$.

\medskip
\noindent{{\it Keywords\/}: Exceptional simple group, intriguing set, polar space, tight set}

\smallskip

\noindent {{\it MSC(2020)\/}: 51E20, 05B25, 51A50}

\end{abstract}

\section{Introduction}

Let ${\mathcal P}_r$ be a finite classical polar space of rank $r\ge 2$. A tight set is a subset ${\mathcal T}$ of points of ${\mathcal P}_r$ such that for all points $P$ of ${\mathcal P}_r$, the intersection sizes $|P^{\perp}\cap {\mathcal T}|$ take exactly two values involving a parameter $i\ge 1$, according as $P\in {\mathcal T}$ or not, where $P^{\perp}$ is the set of points in ${\mathcal P}_r$ that are collinear with $P$. Tight sets in finite generalized quadrangles were first introduced by Payne \cite{Payne87}, and this notion has been generalized to finite classical polar spaces by Drudge \cite{DrudgeThesis}. Parallel to the notion of a tight set is the concept of an $m$-ovoid. The notions of tight sets and $m$-ovoids were unified under the umbrella of {\it intriguing sets} in \cite{BambergTightsets2007}.  During the past two decades, intriguing sets have been extensively investigated because of their close connections with many other combinatorial/geometric objects such as strongly regular graphs, partial difference sets, Boolean degree one functions, and Cameron-Liebler line classes, cf. \cite{BambergTightsets2007,FMRXZ2021,FI2019Boolean}.

In general,  most known examples of tight sets are found in finite polar spaces of low ranks. However, 1-tight sets exist in all finite classical polar space ${\mathcal P}_r$:  every generator $\mc{G}$ of ${\mathcal P}_r$  is a 1-tight set since $|P^{\perp}\cap \mc{G}|=|\mc{G}|$ if $P\in \mc{G}$, and $|P^{\perp}\cap \mc{G}|=$ the size of a hyperplane of $\mc{G}$ if $P\not\in \mc{G}$. In fact, Drudge \cite{DrudgeThesis} proved that any 1-tight set of ${\mathcal P}_r$ must be a generator. Tight sets with large parameters are much more complicated.  One of the main problems concerning tight sets is to determine for which values of $i$ there exist $i$-tight sets in ${\mathcal P}_r$. A second problem is to characterize $i$-tight sets in ${\mathcal P}_r$  for specific parameter $i$.

As we saw before, one can deduce fairly easily that a generator of ${\mathcal P}_r$  is a 1-tight set. Suppose that $A$ and $B$ are an $i$-tight set and a $j$-tight set in ${\mathcal P}_r$, respectively. If $A, B$ are disjoint, then $A\cup B$ is  an $(i+j)$-tight set; if $A\subseteq B$, then $B\setminus A$ is a $(j-i)$-tight set. Thus, the union of any $i$ pairwise disjoint generators (i.e., the elements of a partial spread) of ${\mathcal P}_r$ forms an $i$-tight set. For this reason, the first question has a simple and complete answer if the classical polar space $\cP_r$ admits a spread of generators. Thus, we will only consider the problem of constructing tight sets for polar spaces which do not admit a spread; and for such a polar space,  it is interesting to construct tight sets with large parameters $i$, where $i$ is greater than the maximum size of a partial spread.  Some results on the existence of spreads of classical polar spaces can be found in \cite[Table 7.4]{GGG}.

In this paper, we construct a new family of $(q^4+1)$-tight sets in $Q(24,q)$ or $Q^-(25,q)$ according as $q=3^f$  or $q\equiv 2\pmod 3$. Our main theorem in this paper is the following.
\begin{thm}\label{thm_main}
 Let $q=p^f$, where $p$ is prime and $f\ge 1$. If  $q\equiv2\pmod 3$, then the classical polar space $Q^-(25,q)$ admits a $(q^4+1)$-tight set with an automorphism group isomorphic to the exceptional group $F_4(q)$; if $q=3^f$, then the classical polar space $Q(24,q)$ admits a $(q^4+1)$-tight set with an automorphism group isomorphic to the exceptional group $F_4(q)$.
\end{thm}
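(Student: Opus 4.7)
The plan is to construct $\mathcal{T}$ as an explicit $F_4(q)$-orbit on singular points of the polar space, using the action of $F_4(q)$ on its minimal module. Over $\mathbb{F}_q$ this module has dimension $26$ when $\gcd(q,3)=1$ and dimension $25$ when $q=3^f$, and in both cases carries a non-degenerate $F_4(q)$-invariant quadratic form $Q$. Realizing the module as the (reduced) Albert algebra $J_3(\mathbb{O}_q)$ over the split octonions, a Witt-index computation identifies the orthogonal geometry as $O^-(26,q)$ for $q\equiv 2\pmod 3$ and $O(25,q)$ for $q=3^f$, so that $F_4(q)\hookrightarrow \Aut(Q^-(25,q))$ in the first case and $F_4(q)\hookrightarrow \Aut(Q(24,q))$ in the second.

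Next, I would take $\mathcal{T}$ to be the $F_4(q)$-orbit of a projective point spanned by a rank-one trace-zero element of $J_3(\mathbb{O}_q)$, i.e., the classical ``white'' orbit of highest-weight vectors. The stabilizer of such a point is a maximal parabolic subgroup of $F_4(q)$, so $|\mathcal{T}|$ is given by the orbit--stabilizer theorem. The first sanity check is that this cardinality matches $(q^4+1)\cdot\frac{q^r-1}{q-1}\cdot\text{(gen.\ size)}$ predicted for a $(q^4+1)$-tight set in $\mathcal{P}$, and moreover that all points of $\mathcal{T}$ lie on the null cone of $Q$, which follows from the vanishing of the quadratic trace on rank-one Jordan elements. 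Transitivity of $F_4(q)$ on $\mathcal{T}$ then immediately gives a single value of $|P^\perp\cap\mathcal{T}|$ for all $P\in\mathcal{T}$.

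The main step is showing that $|P^\perp\cap\mathcal{T}|$ takes one and the same value for every $P\notin\mathcal{T}$, independently of the $F_4(q)$-orbit containing $P$. Two complementary approaches present themselves. The \emph{geometric} approach is to enumerate the $F_4(q)$-orbits on points of $\mathcal{P}$ (a short list dictated by the Jordan-algebra rank stratification), pick representatives, and compute $|P^\perp\cap\mathcal{T}|$ via root-subgroup calculations inside each stabilizer. The \emph{spectral} approach uses the characterization of tight sets: $\chi_{\mathcal{T}}$ is a $(q^4+1)$-tight set iff $\chi_{\mathcal{T}}-\frac{(q^4+1)|\mathcal{G}|}{|\mathcal{P}|}\mathbf{1}$ lies in the minimum-eigenvalue eigenspace of the collinearity graph of $\mathcal{P}$. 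Since that eigenspace is an $\Omega^\epsilon(V)$-submodule of $\mathbb{C}[\mathcal{P}]$, it suffices to compare the $F_4(q)$-invariant (or Hom-space) decompositions of the permutation modules $\mathbb{C}[\mathcal{T}]$ and $\mathbb{C}[\mathcal{P}]$, and rule out any contribution from the ``ovoid-type'' (intermediate) eigenspace.

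The hardest part will be precisely this last verification: showing that no nontrivial $F_4(q)$-fixed vector of $\mathbb{C}[\mathcal{P}]$ arising from $\chi_{\mathcal{T}}$ lands in the intermediate eigenspace. The characteristic $3$ case requires separate handling, since the $26$-dimensional module is reducible and only the $25$-dimensional quotient carries a non-degenerate form; one must confirm that the ``white'' orbit descends to this quotient and retains the same intersection behaviour. A careful analysis of short-root and long-root orbits on the Albert algebra, combined with the known permutation character data for $F_4(q)$ acting on its maximal parabolic cosets, should close the argument.
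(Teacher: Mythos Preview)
Your construction is exactly the paper's: $\mathcal{T}$ is the $F_4(q)$-orbit of white (rank-one, trace-zero) points in the minimal module, and the identification of the ambient polar space is correct. The divergence is entirely in the verification strategy, and there you miss the paper's key simplification.

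The paper first proves a general lemma (Lemma~2.1): if $|\mathcal{M}|$ has the size of an $i$-tight set and $|P^\perp\cap\mathcal{M}|=h_1$ for all $P\in\mathcal{M}$, then $\mathcal{M}$ \emph{is} an $i$-tight set. The proof is a variance computation showing $\sum_{P\notin\mathcal{M}}(|P^\perp\cap\mathcal{M}|-h_2)^2=0$. Combined with transitivity of $F_4(q)$ on $\mathcal{T}$, this reduces the entire verification to computing $|P^\perp\cap\mathcal{T}|$ for a \emph{single} white point $P=\langle(0,0,0\mid x_1,0,0)\rangle$ and checking the answer is $q^{11}+(q^4+1)\frac{q^{11}-1}{q-1}$. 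The paper does this by an explicit case count over the three types of white vectors, using short lemmas on octonion annihilators.

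Your proposal has the logic inverted: you observe that transitivity gives a constant value on $\mathcal{T}$ and then declare the ``main step'' to be the $P\notin\mathcal{T}$ case. That step is in fact unnecessary. Worse, your geometric route (enumerate all $F_4(q)$-orbits on singular points and compute $|P^\perp\cap\mathcal{T}|$ for a representative of each) runs into a genuine obstacle: for $q\equiv 2\pmod 3$ the paper shows (Remark~3.1) that there are at least three such orbits, but whether there are exactly three is left open. So you would be trying to sum over an orbit list that is not known in general. The spectral route could in principle work, but ``compare Hom-space decompositions and rule out the ovoid-type eigenspace'' is not a concrete argument; you would still need to locate $\chi_{\mathcal{T}}$ inside the eigenspace decomposition of $\mathbb{C}[\mathcal{P}]$, which amounts to computing the very intersection numbers you are trying to avoid.

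For $q=3^f$ the paper does not compute at all: Cohen--Cooperstein already showed $F_4(q)$ has exactly two orbits on singular points of $Q(24,q)$, and any two-orbit decomposition by a group of isometries automatically yields a pair of intriguing sets; since $|\mathcal{T}|$ is not divisible by the ovoid number, it must be the tight one.
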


The novelty of the construction is the use of the action of the exceptional simple group $F_4(q)$ on its minimal module over $\F_q$. We remark that the characteristic $3$ case of Theorem \ref{thm_main} is essentially due to  Cohen and Cooperstein. They showed in \cite[Table~2]{Cohen1988} that the group $F_4(q)$ has two orbits on the singular points of the polar space $Q(24,q)$ associated with its $25$-dimensional minimal module, and from this fact we deduce that each orbit is a tight set.

The paper is organized as follows. In Section 2, we introduce some preliminary results on intriguing sets, octonions and the minimal module of $F_4(q)$. In particular, we give a more effective way to decide whether a subset $\cM$ of points of a finite classical polar space ${\mathcal P}_r$ is an intriguing set, see Lemma \ref{lem_basic}.  In Section 3, we present the proof of Theorem \ref{thm_main}.

\section{Preliminary}

\subsection{Intriguing sets on finite classical polar spaces}
Let $\F_{q}$ be the finite field of order $q$, where $q=p^f$, $p$ is a prime and $f\ge 1$.  Let $V$ be a $d$-dimensional vector space over $\F_q$ equipped with a nondegenerate reflexive sesquilinear form or quadratic form $\kappa$, and let $\cP_r$ be the associated polar space. A point of $\cP_r$ is defined as a $1$-dimensional totally isotropic/singular subspace of $V$. A maximal totally isotropic/singular subspace of $\cP_r$ is called a \textit{generator} of $\cP_r$. All generators have the same dimension $r$, which is called the \textit{rank} of $\cP_r$. A generator of $\cP_r$ has $\frac{q^r-1}{q-1}$ points. An \textit{ovoid}  of $\cP_r$ is a set of points which meets each generator in exactly one point. We use $\theta_r$ to denote the size of a putative ovoid, which we call {\it the ovoid number} of $\cP_r$.  A simple counting arguement shows that $|\cP_r|=\theta_r\cdot \frac{q^r-1}{q-1}$.   We list the ranks and the ovoid numbers of the six classical polar spaces in Table \ref{tab_valuede}.
\begin{table}[!h]\footnotesize\tabcolsep 16pt
	\centering
	\caption{The parameters $r$ and $\theta_{r}$}	\label{tab_valuede}
	\begin{tabular}{cccccc}
		\hline
		& $d$ & $f$& polar space $\cP_r$ & rank $r$ & ovoid number $\theta_{r}$ \\ 
		\hline
		$\bS$ & even&-& $W(d-1,q)$ & $d/2$ & $q^{d/2}+1$ \\ 
		\hline
		\multirow{3}*{$\bO$}
		&even &-& $Q^+(d-1,q)$ & $d/2$ & $q^{d/2-1}+1$ \\ 
		& even &-& $Q^-(d-1,q)$ & $d/2-1$ & $q^{d/2}+1$ \\ 
		& odd &-& $Q(d-1,q)$ & $(d-1)/2$ & $q^{(d-1)/2}+1$\\ 
		\hline
		\multirow{2}*{$\bU$}
		& odd &even &$H(d-1,q)$ & $(d-1)/2$ & $q^{d/2}+1$\\ 
		& even &even& $H(d-1,q)$ & $d/2$ & $q^{(d-1)/2}+1$\\ 

\hline
	\end{tabular}
\end{table}	

Suppose that $r\ge 2$, and let $\cM$ be a nonempty set of  points of $\cP_r$. The set $\cM$ is called an \textit{intriguing set} if there exist some constants $h_1\ne h_2$ such that $|P^{\perp}\cap\cM|=h_{1}$ or $h_{2}$ according as $P\in\cM$ or not, where $P$ ranges over all the points of $\cP_r$, cf. \cite{BambergTightsets2007}. An intriguing set $\cM$ is proper if $\cM\ne\cP_r$. There are exactly two types of intriguing sets:
\begin{enumerate}
	\item[(1)]$i$-tight sets: $|\cM|=i\cdot\frac{q^r-1}{q-1}$, $h_1=q^{r-1}+i\cdot\frac{q^{r-1}-1}{q-1}$, $h_2=i\cdot\frac{q^{r-1}-1}{q-1}$, and
	\item[(2)]$m$-ovoids: $|\cM|=m\theta_r$, $h_1=(m-1)\theta_{r-1}+1$, $h_2=m\theta_{r-1}$.
\end{enumerate}
We refer the reader to \cite{BambergTightsets2007} for more properties of intriguing sets. In particular, if $H$ is a subgroup of semisimilarities that has exactly two orbits $O_1,\,O_2$ on the points of $\cP_r$, then both $O_1$ and $O_2$ are intriguing sets of the same type.

To prove that a candidate subset $\cM$ of  points of $\cP_r$ is an intriguing set, one needs to show that $\cM$  is a two-intersection set with respect to the perp of singular points, cf. \cite{BambergTightsets2007}. However, there is one shortcut method which seems to have gone unnoticed (see Leema 2.1 below).  We observe that $\theta_r-1=q(\theta_{r-1}-1)$ by Table \ref{tab_valuede}.

\begin{lemma}\label{lem_basic}
Let $\cM$ be a subset of size $i \frac{q^r-1}{q-1}$ or $m \theta_r$ in $\cP_r$ for some $i$ or $m$, and let $h_1,h_2$ be the corresponding parameters determined by $|\cM|$ (see above).  Then the following are equivalent:
\begin{itemize}
\item [(1)] $\cM$ is an intriguing set in $\cP_r$;
\item [(2)] $|P^\perp\cap\cM|=h_1$ for all $P\in\cM$;
\item [(3)] $|P^\perp\cap\cM|=h_2$ for all $P\in \cP_r\setminus \cM$.
\end{itemize}
\end{lemma}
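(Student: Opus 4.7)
By definition of an intriguing set, $(1)\Leftrightarrow(2)\wedge(3)$, so the substance is in showing $(2)\Rightarrow(3)$; I would prove that, and note $(3)\Rightarrow(2)$ follows by the symmetric argument applied to $\cP_r\setminus\cM$ (which has the right size for the ``dual'' intriguing set with parameters $(\mu-h_2,\mu-h_1)$, where $\mu:=|P^\perp|$ is constant on $\cP_r$). So fix $\cM$ of the hypothesized size satisfying $(2)$ and aim to show $|P^\perp\cap\cM|=h_2$ for every $P\notin\cM$.

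\textbf{First moment.} Let $\chi$ be the characteristic vector of $\cM$ and let $\tilde A$ be the $\cP_r\times\cP_r$ incidence matrix of the relation ``$Q\in P^\perp$'' (diagonal included), so that $\tilde A\mathbf 1=\mu\mathbf 1$ and $f(P):=|P^\perp\cap\cM|=(\tilde A\chi)_P$. Double counting gives $\sum_P f(P)=\mu|\cM|$, so by $(2)$,
\[
\sum_{P\notin\cM}f(P)=(\mu-h_1)|\cM|.
\]
A short calculation using the formulas for $|\cM|,h_1,h_2,\mu$ and the identity $\theta_r-1=q(\theta_{r-1}-1)$ flagged just before the lemma shows $(\mu-h_1)|\cM|=h_2(|\cP_r|-|\cM|)$; this has to be checked separately in the tight-set and $m$-ovoid cases but is routine in each. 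Hence the mean of $f$ over $\cP_r\setminus\cM$ is exactly $h_2$.

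\textbf{Second moment.} To upgrade this mean to a pointwise equality I would compute a second moment. The collinearity graph of $\cP_r$ is strongly regular with three eigenvalues, so $\tilde A^2=\alpha I+\beta\tilde A+\gamma J$ for explicit constants $\alpha,\beta,\gamma$ read off from the SRG parameters of $\cP_r$. Using $(2)$ once more in the form $\chi^T\tilde A\chi=h_1|\cM|$,
\[
\sum_{P}f(P)^2=\chi^T\tilde A^2\chi=\alpha|\cM|+\beta h_1|\cM|+\gamma|\cM|^2.
\]
Subtracting the $\cM$-contribution $h_1^2|\cM|$ and a parallel numerical check should yield $\sum_{P\notin\cM}f(P)^2=h_2^2(|\cP_r|-|\cM|)$. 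Combined with the first-moment identity this gives $\sum_{P\notin\cM}(f(P)-h_2)^2=0$, forcing $f(P)=h_2$ on the complement of $\cM$, which is $(3)$.

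\textbf{Main obstacle.} The logical skeleton is a clean two-moment plus Cauchy--Schwarz argument; the technical heart is the two numerical identities. The first is exactly what the observation $\theta_r-1=q(\theta_{r-1}-1)$ is designed to deliver. The harder part will be the second-moment identity, which additionally invokes the SRG parameters of $\cP_r$; it simplifies once one recalls that, by construction of intriguing sets, $h_1-h_2$ equals one of the non-trivial eigenvalues of $\tilde A$, so the Cauchy--Schwarz bound becomes tight. The only bookkeeping is running the tight-set and $m$-ovoid cases in parallel.
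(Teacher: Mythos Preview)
Your proposal is correct and follows essentially the same strategy as the paper: a two-moment computation showing $\sum_{P\notin\cM}(|P^\perp\cap\cM|-h_2)^2=0$, which forces $(3)$ from $(2)$. The only difference is in packaging the second moment: you invoke the strongly regular graph identity $\tilde A^2=\alpha I+\beta\tilde A+\gamma J$, whereas the paper computes $\sum_P|P^\perp\cap\cM|^2$ by direct geometric double counting over pairs $(z_1,z_2)\in\cM^2$, splitting according to whether $\langle z_1,z_2\rangle$ is totally singular (a cone over $\cP_{r-2}$) or not (a copy of $\cP_{r-1}$); these are the same computation in different language, and the numerical check the paper carries out is exactly your ``parallel numerical check.'' For $(3)\Rightarrow(2)$ the paper simply says the argument is similar and omits it, while your complement/duality remark is a clean way to avoid repeating the calculation.
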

\begin{proof}
We will prove the equivalence of (1) and (2). It is clear that (1) implies (2). So we only need to prove that (2) implies (1). We will use $\cP_j$ to denote the polar space of rank $j$ and of the same type as $\cP_r$.  We compute $\sum_{P\in\cP_r}|P^\perp\cap\cM|$ and $\sum_{P\in\cP_r}|P^\perp\cap\cM|^2$, and will show that $\sum_{P\in\cP_r\setminus\cM}\left(|P^\perp\cap\cM|-h_2\right)^2=0$. First we have
\begin{align*}
	\sum_{P\in\cP_r} |P^\perp \cap \cM| &= |\{ (P,z):  P\in \cP_r, z\in \cM, P\sim z\}|\\
	&= \sum_{z\in \cM} |z^\perp\cap \cP_r|= q |\cM| \cdot |\cP_{r-1}|+|\cM|.
\end{align*}
For the last equality, we observe that for a point $z\in\cM$ the intersection $z^\perp\cap \cP_r$ is a cone with vertex $z$ and base $\cP_{r-1}$.
Furthermore we have
\begin{align}
\nonumber \sum_{P\in\cP_r}|P^\perp\cap\cM|^2&=\sum_{P\in \cP_r}|\{(P,z_1,z_2):\,z_1,z_2\in\cM,\, z_1\sim P,\, z_2\sim P\}|\\
&=\sum_{z\in\cM}|z^\perp\cap\cP_r|+\sum_{\substack{z_1,z_2\in\cM,\\ z_1\ne z_2}}|\la z_1,z_2\ra^\perp\cap\cP_r|.\label{eqn_PcapM}
\end{align}
For the last summation in \eqref{eqn_PcapM}, each summand takes value either $q^2 \cdot |\cP_{r-2}|+q+1$ or  $|\cP_{r-1}|$  depending on whether $z_1,z_2$ are perpendicular or not: $\la z_1,z_2\ra^\perp\cap\cP_r$ is a cone with vertex $\la z_1,z_2\ra$ and base $\cP_{r-2}$ if $\la z_1,z_2\ra$ is totally singular or isotropic, and it is $\cP_{r-1}$ otherwise. 
 Next we compute directly
 \begin{align}
 	\notag \sum_{P\in\cP_r}\left(|P^\perp\cap\cM|-h_2\right)^2&=\sum_{P\in\cP_r}\Big(|P^\perp\cap\cM|^2-2h_2|P^\perp \cap \cM|+h_2^2\Big)\\
 	\notag &=|\cM|\cdot \Big( |\cP_{r-1}|(|\cM|+q-h_1-2h_2q)+ (qh_1-q+h_1-2h_2)\\ &
 	 \ +q^2(h_1-1) |\cP_{r-2}|+\frac{h_2^2|\cP_r|}{|\cM|} \Big). \label{eqn_PcapMh2}
 \end{align}
 First consider the case where $\cM$ is an $i$-tight set, so that  $|\cM|=i\frac{q^{r}-1}{q-1},h_1=i\frac{q^{r-1}-1}{q-1}+q^{r-1},h_2=i\frac{q^{r-1}-1}{q-1}$.  Observe that  the right hand side of $\eqref{eqn_PcapMh2}$ divided by $|{\mathcal M}|$ can be viewed as a degree-one polynomial in variable $i$. Then some tedious but routine computations show that the coefficient of $i$ equals $0$. Hence we are only concerned with the constant term of the aforementioned polynomial, which equals
\begin{align*}
    &|\cM| \big(  |\cP_{r-1}| (q-q^{r-1}) +(q+1)q^{r-1}-q+q^2(q^{r-1}-1)|\cP_{r-2}|\big)\\ =&q^{2r-2} =|\cM|\cdot (h_1-h_2)^2.
\end{align*}
From $\sum_{P\in\cP_r}\left(|P^\perp\cap\cM|-h_2\right)^2=|\cM|(h_1-h_2)^2$ we deduce that $\sum_{P\in\cP_r\setminus\cM}\left(|P^\perp\cap\cM|-h_2\right)^2=0$. This proves that (2) implies (1) when $\cM$ is an $i$-tight set.  When $\cM$ is an $m$-ovoid, we can use the same method to obtain the result. The proof of (1)$\Leftrightarrow$(3) is similar  and we omit the details. This completes the proof.
\end{proof}
\subsection{Octonions and the minimal module of $F_4(q)$}
The octonion algebra $\mathbb{O}_{\F_q}$ is an 8-dimensional non-commutative and non-associative algebra over $\F_q$.  A detailed introduction to octonions can be found in the book \cite{Springer2000}. Let $\{x_1,\ldots,x_8\}$ be the basis of the octonion algebra $\mathbb{O}$ (here in order to simplify notation we omit the subscript) as defined in \cite[(4.26)]{wilson2009finite}. With respect to this basis, the multiplication of $\mathbb{O}$ is defined as follows:
\begin{equation}\label{eqn_Omult}
	\begin{tabular}{c|cccccccc|}
		 & $x_1$ &$x_2$ &$x_3$ &$x_4$ &$x_5$ &$x_6$ &$x_7$ & $x_8$\\ \hline
       $x_1$ & & & & & $x_1$ & $x_2$ &$-x_3$ &$-x_4$ \\
       $x_2$ & & & $- x_1$ &$x_2$ &  &  & $-x_5$& $x_6$ \\
       $x_3$ & & $x_1$ & & $x_3$ &  & $-x_5$ & & $x_7$ \\
       $x_4$ & $x_1$ & & & $x_4$ & & $x_6$ & $x_7$ & \\
       $x_5$ & & $x_2$ & $x_3$ & & $x_5$ & & & $x_8$ \\
       $x_6$ & $-x_2$ & & $-x_4$ & &$x_6$ & & $x_8$ & \\
       $x_7$ & $x_3$ & $-x_4$ & & & $x_7$ & $-x_8$ & & \\
       $x_8$ & $-x_5$ & $-x_6$ & $x_7$ & $x_8$ & & & & \\ \hline 	
 	\end{tabular}
\end{equation}
where the blank entries are $0$, cf. \cite[(4.27)]{wilson2009finite}. It is clear that $\mathbf{1}:=x_4+x_5$ is the identity of $\mathbb{O}$.  The octonion conjugation $^-$ swaps $x_4, \,x_5$ and maps the other $x_i$'s to their negations, and it is an anti-isomorphism of $\mathbb{O}$.  For an octonion $x=\sum_{i=1}^8\lambda_ix_i$, we define its trace as $\textup{Tr}(x)=x+\overline{x}=\lambda_4+\lambda_5$ and its norm as $N(x)=x\overline{x}$. It is routine to show that $N(x)=\sum_{i=1}^4\lambda_i\lambda_{9-i}$ which takes value in $\F_q$, and defines a nondegenerate hyperbolic quadric $Q^+(7,q)$.  Its associated bilinear form is $B(x,y):=N(x+y)-N(x)-N(y)$, and we have $B(x,y)=\tr(x\overline{y})$ for $x,y\in\mathbb{O}$. In particular, we have $\tr(x)=B(x,1)$. The octonion algebra $\mathbb{O}$ is an alternative ring by \cite[Lemma~3]{wilsonF4}, so that the subalgebra generated by any two elements is associative, cf. \cite[Chapter~1.4]{Springer2000}. It follows that $\mathbb{O}$ is a composition algebra, i.e., $N(xy)=N(x)N(y)$ for $x,y\in\mathbb{O}$. The automorphism group of the octonion algebra $\mathbb{O}$ is known as $G_2(q)$, and each of its element fixes the vector $\mathbf{1}:=x_4+x_5$. It holds that $x^2-\tr(x)x+N(x)=0$ for $x\in\mathbb{O}$ by \cite[Proposition 1.2.3]{Springer2000}, so $G_2(q)$ lies in the isometry group of the quadratic form $N$. We write $\mathbf{1}^\perp$ for the perp of $\mathbf{1}$ with respect to $B$, which is stabilized by $G_2(q)$.\medskip

The $G_2(q)$-orbits on the nonzero octonions are implicitly known by the results in \cite[Chapter 4.3]{wilson2009finite}. We list them explicitly in the next lemma for future use.
\begin{lemma}\label{lem_vt}
Suppose that $q>2$. For $q$ odd, let $\alpha$ be a nonsquare of $\F_q^*$ and $T$ be a complete set of coset representatives of $\{1,-1\}$ in $\F_q^*$; for $q$ even, let $\beta$ be an element of $\F_q^*$ with absolute trace $1$ and and $S$ be a complete set of coset representatives of $\{0,1\}$ in $\F_q$. Then the $G_2(q)$-orbits on the octonions are as listed in Table \ref{tab_octOrb}.
\end{lemma}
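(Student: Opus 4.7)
Every element of $G_2(q)=\Aut(\mathbb{O})$ fixes $\mathbf{1}$ and preserves the product, and hence preserves both the trace $\tr(x)=B(x,\mathbf{1})$ and the norm $N(x)=x\overline{x}$. The invariant map $(\tr,N)\colon\mathbb{O}\to\F_q^2$ therefore partitions $\mathbb{O}$ into $G_2(q)$-invariant fibres, and the heart of the proof is to identify each fibre with a single $G_2(q)$-orbit, possibly after adjoining one further invariant in characteristic two. I would then read the canonical representatives in Table~\ref{tab_octOrb} off these invariants.

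For $q$ odd I would use the orthogonal decomposition $\mathbb{O}=\F_q\mathbf{1}\perp\mathbf{1}^\perp$, which is available because $B(\mathbf{1},\mathbf{1})=2\neq 0$. Writing $x=\tfrac{\tr(x)}{2}\mathbf{1}+y$ with $y\in\mathbf{1}^\perp$, the projection $x\mapsto y$ is $G_2(q)$-equivariant, so the orbit of $x$ is determined by $\tr(x)$ together with the $G_2(q)$-orbit of $y$. The form $N$ restricted to $\mathbf{1}^\perp$ is a nondegenerate $Q(6,q)$, and I would invoke the classical description (essentially \cite[Ch.~4.3]{wilson2009finite}) saying that on this natural seven-dimensional module $G_2(q)$ has one orbit of nonzero singular vectors and a single orbit on the vectors of each fixed nonzero norm. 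Writing concrete representatives in the hyperbolic pair $\{x_1,x_8\}$ of $\mathbf{1}^\perp$ (so $N(ax_1+bx_8)=ab$ after checking $B(x_1,x_8)=1$) produces the table; the parameters $\alpha$ and $T$ enter precisely because we choose one $(a,b)$ per square class of the norm.

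For $q$ even the argument is more delicate since $B(\mathbf{1},\mathbf{1})=0$ forces $\mathbf{1}\in\mathbf{1}^\perp$ and the orthogonal split fails. Here I would work directly from the characteristic equation $x^2+\tr(x)x+N(x)\mathbf{1}=0$. If $\tr(x)=0$ the argument on $\mathbf{1}^\perp$ still goes through, but one must separate the $\F_q\mathbf{1}$-direction from the rest of the radical-quotient. If $\tr(x)\neq 0$, the subalgebra $\F_q[x]$ is either $\F_q\oplus\F_q$ or $\F_{q^2}$ according to whether the Artin--Schreier class $[N(x)\tr(x)^{-2}]\in \F_q/\wp(\F_q)$, with $\wp(s)=s^2+s$, is trivial or nontrivial. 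This extra invariant supplements $(\tr,N)$, and the element $\beta$ of absolute trace $1$ together with the set $S$ of coset representatives of $\{0,1\}$ in $\F_q$ are exactly what is needed to choose one canonical representative in each Artin--Schreier class.

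The main obstacle will be the transitivity half of the claim, namely that once $(\tr,N)$ (and in characteristic two the Artin--Schreier class) is prescribed, all remaining octonions lie in a single $G_2(q)$-orbit. For this I would exploit the subgroup structure of $G_2(q)$: the parabolic stabilising a singular point of $\mathbf{1}^\perp$ has Levi factor isomorphic to $\GL_2(q)$ and acts on the hyperbolic complement, and combining it with a maximal torus allows one to sweep an arbitrary element of prescribed invariants onto the designated representative. Careful bookkeeping of signs in odd characteristic, and of Artin--Schreier classes in characteristic two, is where the technical work is concentrated.
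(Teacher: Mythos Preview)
Your plan is sound in outline and would ultimately succeed, but it is organised quite differently from the paper's argument and misses the shortcut that makes the paper's proof short.

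You work ``top down'': fix the invariant map $(\tr,N)$ (plus, in characteristic~$2$, the Artin--Schreier class of $N(x)\tr(x)^{-2}$), compute the fibres, and then prove $G_2(q)$ is transitive on each fibre by an explicit sweeping argument using the parabolic and a torus. The paper works ``bottom up'': for each of the listed representatives it quotes from \cite[Ch.~4.3]{wilson2009finite} the exact point stabiliser in $G_2(q)$ (namely $G_2(q)$ itself, $q^{2+1+2}{:}\SL_2(q)$, $\SL_3(q)$, or $\SU_3(q)$), reads off the orbit length by the orbit--stabiliser theorem, checks that distinct representatives have distinct $(\tr,N)$, and then simply adds the lengths to get $q^8$. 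That single counting line replaces the entire transitivity argument you flag as the ``main obstacle''; no sweeping with parabolics or tori is ever needed. A second minor difference is that for $q$ odd the paper takes the anisotropic representative $a(x_4-x_5)$ inside the split $2$-dimensional subalgebra $\langle x_4,x_5\rangle$ rather than in the hyperbolic pair $\langle x_1,x_8\rangle$, because the stabiliser $\SL_3(q)$ of $x_4$ (equivalently of $x_4-x_5$) is already identified in Wilson; your representatives $ax_1+bx_8$ are in the same orbits but would require an extra identification step.

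In short: your invariant analysis is correct and your Artin--Schreier bookkeeping for even $q$ is exactly right, but the paper avoids your hardest step by trading a direct transitivity proof for a stabiliser lookup plus a size count. If you incorporate that counting idea, your proof collapses to essentially the paper's.
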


\begin{table}[h]
\centering\caption{The $G_2(q)$-orbits on the octonions}\label{tab_octOrb}
\begin{tabular}{|c|c|c|c|c|c|}
\hline
Representative                                                                               & Orbit size & Stabilizer           & Trace & Norm                & Condition                 \\ \hline
$k$, $k\in\F_q$                                                                            & $1$        & $G_2(q)$             & $2k$  & $k^2$               & \multirow{2}{*}{}         \\ \cline{1-5}
$x_1+k$, $k\in\F_q$                                                                          & $q^6-1$    & $q^{2+1+2}:\SL_2(q)$ & $2k$  & $k^2$               &                           \\ \hline
\begin{tabular}[c]{@{}c@{}}$a(x_4-x_5)+k$, \\ $a\in T,k\in\F_q$\end{tabular}                 & $q^6+q^3$  & $\SL_3(q)$           & $2k$  & $k^2-a^2$           & \multirow{2}{*}{$q$ odd}  \\ \cline{1-5}
\begin{tabular}[c]{@{}c@{}}$a(x_1-\alpha x_8)+k$, \\ $a\in T,k\in \F_q$\end{tabular}         & $q^6-q^3$  & $\SU_3(q)$           & $2k$  & $k^2-\alpha a^2$    &                           \\ \hline
\begin{tabular}[c]{@{}c@{}}$a(x_4+k)$, \\ $a\in\F_q^*,k\in S$\end{tabular}                     & $q^6+q^3$  & $\SL_3(q)$           & $a$   & $a^2(k^2+k)$            & \multirow{2}{*}{$q$ even} \\ \cline{1-5}
\begin{tabular}[c]{@{}c@{}}$a(x_4+x_1+\beta x_8+k)$, \\ $a\in\F_q^*$, $k\in S$\end{tabular} & $q^6-q^3$  & $\SU_3(q)$           & $a$   & $a^2(k^2+k+\beta)$ &                           \\ \hline
\end{tabular}
\end{table}
\begin{proof}
We give a sketch of the proof by quoting the relevant results in \cite[Chapter 4.3]{wilson2009finite}. Since each element of $G_2(q)$ fixes $\mathbf{1}$, each element $k\in\F_q$ is stabilized by $G_2(q)$. Take an element $k\in\F_q$ and set $v=x_1+k$. For an element $g\in G_2(q)$, it stabilizes $v$ if and only if it stabilizes $x_1$. The stabilizer of $\la x_1\ra$ is a maximal parabolic subgroup $q^{2+1+2}:\GL_2(q)$, and the stabilizer of $x_1$ in the latter subgroup is $q^{2+1+2}:\SL_2(q)$. Hence the $G_2(q)$-orbit of $x_1+k$ has size $\frac{|G_2(q)|}{q^5\cdot|\SL_2(q)|}=q^6-1$ for each $k\in\F_q$. The trace and norm of $x_1+k$ are respectively $2k,k^2$, so \begin{color}{blue} the $(x_1+k)$'s \end{color} are in different $G_2(q)$-orbits for  distinct $k$'s in $\F_q$. The arguments so far work for both even and odd $q$.

Suppose that $q$ is odd and set $u=x_4-x_5$, $v=x_1-\alpha x_8$, where $\alpha$ is a nonsquare in $\F_q^*$. The stabilizer of $\la u\ra$ in $G_2(q)$ is a maximal subgroup $\SL_3(q):\la s\ra$, where $s$ has order $2$ and swaps $x_4$ and $x_5$. The stabilizer of $u$ in the latter subgroup is $\SL_3(q)$. As in the previous paragraph, we deduce that the $G_2(q)$-orbit of $au+k$ has size $\frac{|G_2(q)|}{|\SL_3(q)|}=q^6+q^3$ for each $a\in\F_q^*$ and $k\in\F_q$. The trace and norm of $au+k$ are respectively $2k,-a^2+k^2$, so $au+k$ and $a'u+k'$ are in the same $G_2(q)$-orbit if and only if $a'=\pm a$ and $k'=k$. The subalgebra $C:=\la \mathbf{1},v\ra$ is isomorphic to $\F_{q^2}$, and the stabilizer of $v$ in $G_2(q)$ is $\SU_3(q)$. There is an involution $r\in G_2(q)$ that maps $x_1,x_8$ to $-x_1,-x_8$ and thus $v$ to $-v$, and $\SU_3(q):\la r\ra$ is a maximal subgroup of $G_2(q)$ that stabilizes $\la v\ra$. Similarly, we deduce that the $G_2(q)$-orbit of $av+k$ has size $\frac{|G_2(q)|}{|\SU_3(q)|}=q^6-q^3$ for each $a\in\F_q^*$ and $k\in\F_q$. The trace and norm of $av+k$ are respectively $2k,k^2-a^2$, so $av+k$ and $a'v+k'$ are in the same $G_2(q)$-orbit if and only if $a'=\pm a$ and $k'=k$. Since
\[
q+q\cdot (q^6-1)+ \frac{1}{2}(q-1)q\cdot (q^6+q^3)+ \frac{1}{2}(q-1)q\cdot (q^6-q^3)=q^8,
\]
we conclude that we have obtained all the $G_2(q)$-orbits and their relevant information is as listed in Table \ref{tab_octOrb} for $q$ odd.

Suppose that $q$ is even and set $u=x_4$, $v=x_1+\beta x_8+x_4$, where $\beta$ has absolute trace $1$, i.e., $X^2+X+\beta$ is irreducible over $\F_q$. The element $v$ has trace $1$ and norm $\beta$, and there are $q^6-q^3$ octonions with those properties. The stabilizer of $u$ in $G_2(q)$ is $\SL_3(q)$, and there is an order $2$ element $s$ that swaps $u_4,u_5$ such that $\SL_3(q):\la s\ra$ is a maximal subgroup. We have $v^2+v+\beta=0$, so the subalgebra $\la \mathbf{1},v\ra$ is a field with $q^2$ element. There is an involution $r\in G_2(q)$ that fixes $x_1,x_8$ and swaps $x_4$ and $x_5$, so that $r$ maps $v$ to \begin{color}{blue}$v+\mathbf{1}$\end{color}. The stabilizer of $v$ in $G_2(q)$ is $\SU_3(q)$, and $\SU_3(q):\la r\ra$ is a maximal subgroup. The remaining arguments are exactly the same as in the $q$ odd case, and we omit the details. This completes the proof.
\end{proof}

For a nonzero element $a$ in $\mathbb{O}$, we define its left and right annihilators as follows
\begin{align*}
\textup{ann}_L(a)=\{x\in\mathbb{O}:\,xa=0\},\quad \textup{ann}_R(a)=\{x\in\mathbb{O}:\,ax=0\}.
\end{align*}
By \cite[Lemma 1.3.3]{Springer2000}, we have $x(\overline{x}a)=N(x)a$ and $(ax)\overline{x}=N(x)a$, so both $\textup{ann}_L(a)$ and $\textup{ann}_R(a)$ are totally singular subspaces for the quadratic form $N$. If $N(a)\ne 0$, then both of its annihilators are trivial by a similar argument.

\begin{lemma}\label{lem_condO}
Suppose that $D,E$ are nonzero elements in $\mathbb{O}$ such that $DE=0$. Then $D\overline{D}=E\overline{E}=0$, $\textup{ann}_L(D)$ and $\textup{ann}_R(D)$ are totally isotropic subspaces of dimension $4$, and
\begin{itemize}
\item[(1)] $\textup{ann}_L(D)\cap \textup{ann}_R(D)$ has dimension $3$ or $1$  according as $D\in \mathbf{1}^\perp$ or not;
\item[(2)] $\textup{ann}_L(D)\cap \textup{ann}_R(E)$ has dimension $3$.
\end{itemize}
\end{lemma}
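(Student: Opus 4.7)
Proof proposal:

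My plan exploits three algebraic ingredients: the composition property $N(xy) = N(x) N(y)$, the alternating associator of the alternative algebra $\mathbb{O}$, and the polarized quadratic identity
\[
xy + yx = \tr(x) y + \tr(y) x - B(x, y) \mathbf{1},
\]
obtained by linearizing $z^2 = \tr(z) z - N(z) \mathbf{1}$. First, $N(D) = N(E) = 0$ follows from invertibility: if $N(D) \neq 0$, then $D$ is invertible in the associative subalgebra generated by $D$ and $E$, contradicting $DE = 0$ with $E \neq 0$. To prove both annihilators are totally isotropic of dimension $4$, I observe that $x \in \textup{ann}_L(D)$ forces $N(x) = 0$ (again by invertibility), whence $B(x, y) = 0$ on the annihilator via $N(x+y) = 0$; dually $\mathbb{O}D$ is totally isotropic of dimension at most $4$ since $N(yD) = N(y) N(D) = 0$, and rank-nullity forces both dimensions to equal $4$.

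For (1), I consider the map $\mu \colon A \to \mathbb{O}$, $\mu(x) = Dx$, with kernel $A \cap B$, where $A = \textup{ann}_L(D)$ and $B = \textup{ann}_R(D)$. The polarized identity combined with $xD = 0$ yields $\mu(x) = \tr(x) D + \tr(D) x - B(x, D) \mathbf{1}$. If $D \in \mathbf{1}^\perp$, then $D^2 = 0$ places $D \in A$; the isotropy of $A$ then kills $B(x, D)$, reducing $\mu$ to $x \mapsto \tr(x) D$, whose image is one-dimensional since $A$ cannot lie in the Witt-index-$3$ quadric $\mathbf{1}^\perp$. Hence $\dim(A \cap B) = 3$. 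If $D \notin \mathbf{1}^\perp$, solving $\mu(x) = 0$ and intersecting with $A$ isolates the line $\langle D - \tr(D) \mathbf{1}\rangle$, so $\dim(A \cap B) = 1$.

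For (2), I study $\phi \colon C \to \mathbb{O}$, $\phi(y) = yD$, with kernel $A \cap C$, where $C = \textup{ann}_R(E)$. The alternating property of the associator together with $Ey = 0$ and $DE = 0$ gives $E(yD) + (yD) E = 0$, and feeding this into the polarized identity produces
\[
\tr(yD) \, E + \tr(E) \, yD = B(yD, E) \mathbf{1}.
\]
A short case analysis on $\tr(E)$ shows that $yD \in \langle \mathbf{1}, E\rangle$. Since $yD$ also lies in $\mathbb{O}D$, and $\mathbb{O}D$ contains $\overline{E}$ (because $\overline{E}\,\overline{D} = \overline{DE} = 0$) but not $\mathbf{1}$ (as $N(\mathbf{1}) = 1$), we obtain $\mathbb{O}D \cap \langle \mathbf{1}, E\rangle = \langle \overline{E}\rangle$. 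Hence the image of $\phi$ has dimension at most one, which already gives $\dim(A \cap C) \ge 3$.

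The main obstacle will be the reverse inequality, i.e., ruling out $A = C$. When $\tr(D) \neq 0$, I plan to use the element $\mathbf{1} - D/\tr(D) \in A$ (and its analogue for $E$) in combination with the polarized identity to force $E$ to be a scalar multiple of $D$ under the assumption $A = C$, which contradicts $DE = 0$. When $\tr(D) = 0$, I will invoke $G_2(q)$-equivariance and the transitivity of $G_2(q)$ on nonzero norm-$0$ vectors in $\mathbf{1}^\perp$ (implicit in Lemma~\ref{lem_vt}) to reduce to $D = x_1$, and then verify directly from the multiplication table~\eqref{eqn_Omult} that no nonzero $E$ with $x_1 E = 0$ satisfies $\textup{ann}_R(E) = \textup{ann}_L(x_1)$.
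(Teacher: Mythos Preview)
Your overall strategy is genuinely different from the paper's. The paper reduces immediately via $G_2(q)$-transitivity to $D=x_1$ or $D=x_4$ and then reads everything off the multiplication table; you instead try to extract the dimensions from the polarized identity $xy+yx=\tr(x)y+\tr(y)x-B(x,y)\mathbf{1}$ and the alternating associator, reserving explicit computation only for a final check. Your treatment of the preliminary claims and of part~(1) is correct (the remark that ``$A$ cannot lie in the Witt-index-$3$ quadric $\mathbf{1}^\perp$'' is slightly imprecise in characteristic~$2$, but the real reason works uniformly: $A=A^\perp$, so $A\subseteq\mathbf{1}^\perp$ would force $\mathbf{1}\in A$, contradicting $N(\mathbf{1})=1$).

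There is, however, a genuine gap in part~(2). From $E(yD)+(yD)E=0$ and the polarized identity you correctly obtain
\[
\tr(E)\,(yD)=-\tr(yD)\,E+B(yD,E)\,\mathbf{1}.
\]
When $\tr(E)\neq 0$ this indeed gives $yD\in\langle\mathbf{1},E\rangle$, and your intersection argument with $\mathbb{O}D$ then bounds the image of $\phi$ by a line. But when $\tr(E)=0$ the displayed relation collapses to $\tr(yD)=B(yD,E)=0$, which only says $yD\in\mathbf{1}^\perp\cap E^\perp$; it does \emph{not} place $yD$ in the plane $\langle\mathbf{1},E\rangle$. Concretely, take $D=x_1$, $E=x_2$ (so $\tr(D)=\tr(E)=0$): one has $C=\langle x_1,x_2,x_5,x_6\rangle$ and $\phi(x_6)=x_6x_1=-x_2$, so the image of $\phi$ is $\langle x_2\rangle$, as desired---but this does not follow from your identity, which in this case gives no constraint on $yD$ beyond membership in the $6$-dimensional space $\mathbf{1}^\perp\cap E^\perp$. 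Thus the ``short case analysis on $\tr(E)$'' does not deliver $\dim(A\cap C)\ge 3$ in the case $\tr(E)=0$.

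A partial repair is available: applying octonion conjugation sends $\textup{ann}_L(D)\cap\textup{ann}_R(E)$ to $\textup{ann}_L(\bar E)\cap\textup{ann}_R(\bar D)$ (using $\bar E\,\bar D=0$), which interchanges the roles of $\tr(D)$ and $\tr(E)$. So your argument also covers $\tr(D)\neq 0$. The residual case $\tr(D)=\tr(E)=0$ still needs a separate argument; at that point the cleanest route is to invoke the $G_2(q)$-transitivity you already plan to use, set $D=x_1$, and compute directly---which is exactly what the paper does from the outset. (A minor remark: in your ``$A\neq C$'' step for $\tr(D)\neq 0$, the assumption $A=C$ does not force $E\in\langle D\rangle$ as you suggest; rather it forces $\tr(E)=0$, whence $E^2=0$ gives $E\in C$, while $ED=\tr(D)E\neq 0$ gives $E\notin A$, contradicting $A=C$.)
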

\begin{proof}
By the arguments preceding this lemma, we have $D\overline{D}=E\overline{E}=0$, and $\textup{ann}_L(D)$ and $\textup{ann}_R(D)$ are totally isotropic subspaces. First consider the case where $D$ is in $\mathbf{1}^\perp$. Since $G_2(q)$ is transitive on the nonzero singular vectors in $\mathbf{1}^\perp$, cf. \cite[p.~125]{wilson2009finite}, we assume that $D=x_1$ without loss of generality. It follows from \eqref{eqn_Omult} that $\textup{ann}_L(x_1)=\la x_1,x_2,x_3,x_5\ra $ and $\textup{ann}_R(x_1)=\la x_1,x_2,x_3,x_4\ra$. Both annihilators of $x_1$ have dimension $4$ and their intersection $\la x_1,x_2,x_3\ra$ has dimension $3$. Since $DE=0$, we deduce that $E=\sum_{i=1}^4a_ix_i$ for some $a_i$'s in $\F_q$. For an element $y\in \textup{ann}_L(D)$, we have $y=b_1x_1+b_2x_2+b_3x_3+b_5x_5$ for some $b_i$'s in $\F_q$. If further $y$ is in $\textup{ann}_R(E)$, then $Ey=(a_1b_5-a_2b_3+a_3b_2+a_4b_1)x_1=0$. There are $q^3$ such $(b_1,b_2,b_3,b_5)$ tuples, so $\textup{ann}_L(x_1)\cap \textup{ann}_R(E)$ has dimension $3$ as desired.

We next consider the case $D$ is not in $\mathbf{1}^\perp$. Since $\la x_4\ra$ and $\la x_5\ra$ are in the same $G_2(q)$-orbit and $D$ is singular, we assume without loss of generality that $D=x_4$ up to scalar by Table \ref{tab_octOrb}. We deduce from \eqref{eqn_Omult} that $\textup{ann}_L(x_4)=\la x_1,x_5,x_6,x_7\ra $ and $\textup{ann}_R(x_4)=\la x_2,x_3,x_5,x_8\ra$. Their intersection is $\la x_5\ra$ which has dimension $1$. Since $DE=0$, we deduce that $E=a_2x_2+a_3x_3+a_5x_5+a_8x_8$ for some $a_i$'s in $\F_q$. For an element $y\in \textup{ann}_L(D)$, we have $y=b_1x_1+b_5x_5+b_6x_6+b_7x_7$ for some $b_i$'s in $\F_q$. If further $y$ is in $\textup{ann}_R(E)$, then $Ey=(-a_2b_7-a_3b_6+a_5b_5-a_8b_1)x_5=0$. There are $q^3$ such $(b_1,b_2,b_3,b_5)$ tuples, so $\textup{ann}_L(x_4)\cap \textup{ann}_R(E)$ has dimension $3$ as desired. This completes the proof.
\end{proof}

The following result is well-known, and we include a short proof here. We refer the reader to \cite[Proposition~9.10]{Grove} for the case where $q$ is odd. For a property $\mathbf{P}$, we use the Iverson bracket $[\![\mathbf{P}]\!]$ which takes value $1$ or $0$ according as $\mathbf{P}$ holds or not.
\begin{lemma}\label{lem_Nka}
For a natural number $k$ and an element $a\in\F_q$, write $N_k(a)$ for the number of tuples $(a_1,\ldots,a_{2k})\in\F_q^{2k}$ such that $a_1a_2+\ldots+a_{2k-1}a_{2k}=a$. Then
\[
  N_k(a)=q^{2k-1}-q^{k-1}+q^k\cdot[\![a=0]\!]\quad\textup{ for } a\in\F_q.
\]
\end{lemma}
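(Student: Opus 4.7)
The plan is to evaluate $N_k(a)$ by the standard additive character trick, which handles both cases ($a=0$ and $a\ne 0$) in one stroke and avoids any case-splitting induction.

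Fix a nontrivial additive character $\psi$ of $\F_q$ and use the orthogonality identity $[\![x=a]\!]=\tfrac{1}{q}\sum_{t\in\F_q}\psi(t(x-a))$ to write
\[
N_k(a)\;=\;\frac{1}{q}\sum_{t\in\F_q}\psi(-ta)\prod_{i=1}^{k}\Bigl(\sum_{u,v\in\F_q}\psi(tuv)\Bigr).
\]
The key computation is the inner quadratic Gauss-type sum $S(t):=\sum_{u,v\in\F_q}\psi(tuv)$. For $t=0$ this is plainly $q^2$. For $t\ne 0$, summing in $v$ first gives $q$ when $u=0$ and $0$ when $u\ne 0$, hence $S(t)=q$. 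Substituting yields
\[
N_k(a)\;=\;\frac{1}{q}\Bigl(q^{2k}+q^{k}\sum_{t\ne 0}\psi(-ta)\Bigr).
\]

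The final step is to evaluate the inner character sum. If $a=0$, then $\sum_{t\ne 0}\psi(0)=q-1$, giving $N_k(0)=q^{2k-1}+q^{k-1}(q-1)=q^{2k-1}-q^{k-1}+q^k$. If $a\ne 0$, then $\sum_{t\in\F_q}\psi(-ta)=0$, so $\sum_{t\ne 0}\psi(-ta)=-1$, giving $N_k(a)=q^{2k-1}-q^{k-1}$. The two cases combine into the stated formula. There is no real obstacle here; the only point worth being careful about is the evaluation of $S(t)$ for $t\ne 0$, which is the standard fact that a nontrivial additive character sums to zero over $\F_q^*$. (Alternatively, one can argue by induction on $k$, using the base case $|\{(u,v):uv=b\}|=2q-1$ or $q-1$ according as $b=0$ or $b\ne 0$; the character-sum route is shorter, so I would go with it.)
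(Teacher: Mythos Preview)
Your proof is correct. The paper takes a different route: it verifies the formula for $k=1$ directly and then proceeds by induction on $k$, writing
\[
N_k(a)=\sum_{a_{2k-1},a_{2k}\in\F_q}N_{k-1}(a-a_{2k-1}a_{2k})
=q^2N_{k-1}(1)+\bigl(N_{k-1}(0)-N_{k-1}(1)\bigr)\bigl(q-1+q\cdot[\![a=0]\!]\bigr),
\]
and plugging in the inductive hypothesis. Your additive-character computation is cleaner and closed-form: it replaces the recursion by a single factorisation, so one never has to carry the two cases through an induction. One tiny verbal slip: in the sentence about $S(t)$ you say the nontrivial additive character ``sums to zero over $\F_q^*$''; the inner sum in your computation is over all of $\F_q$, and the character sums to zero over $\F_q$ (equivalently, to $-1$ over $\F_q^*$). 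The calculation itself is fine.
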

\begin{proof}
It is easy to see that the claim holds for $k=1$. We have
\begin{align*}
N_k(a)&=\sum_{a_{2k-1}\in\F_q}\sum_{a_{2k}\in\F_q}N_{k-1}(a-a_{2k-1}a_{2k})\\
&=q^2N_{k-1}(1)+(N_{k-1}(0)-N_{k-1}(1))\cdot|\{a_{2k-1},a_{2k}\in\F_q:\,a_{2k-1}a_{2k}=a\}|\\
&=q^2N_{k-1}(1)+(N_{k-1}(0)-N_{k-1}(1))\left(q-1+q\cdot[\![a=0]\!]\right),
\end{align*}
and the claim follows by induction on $k$.
\end{proof}

\begin{lemma}\label{lem_ONorm}
For $a\in\F_q$ there are $q^7-q^3+q^4\cdot[\![a=0]\!]$ octonions of norm $a$, and among them there are $q^6-q^3+q^4\cdot[\![a=0]\!]$ such $\alpha$'s that $\tr(x_1{\overline{\alpha}})=0$.
\end{lemma}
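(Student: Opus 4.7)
The plan is to reduce both counts to direct applications of Lemma \ref{lem_Nka} after writing the norm form in coordinates. For $\alpha=\sum_{i=1}^{8}\lambda_i x_i$, the paper already established $N(\alpha)=\sum_{i=1}^{4}\lambda_i\lambda_{9-i}$, so the equation $N(\alpha)=a$ is exactly a sum of $k=4$ hyperbolic pairs equal to $a$. I would therefore invoke Lemma \ref{lem_Nka} with $k=4$ to get
\[
|\{\alpha\in\mathbb{O}:\,N(\alpha)=a\}| = N_4(a) = q^{7}-q^{3}+q^{4}\cdot[\![a=0]\!],
\]
which is the first assertion.

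For the second assertion, the first step is to identify the linear condition $\tr(x_1\overline{\alpha})=0$ in coordinates. Since $B(x,y)=\tr(x\overline{y})$, one has $\tr(x_1\overline{\alpha})=B(x_1,\alpha)$, and from $B(x,y)=\sum_{i=1}^{4}(\lambda_i\mu_{9-i}+\mu_i\lambda_{9-i})$ applied to $y=x_1$ (so $\mu_1=1$, other $\mu_j=0$) this collapses to the coefficient $\lambda_8$. (As a sanity check one may also read this directly off the product $x_1\overline{\alpha}$ using the multiplication table \eqref{eqn_Omult}.) So the additional condition is simply $\lambda_8=0$.

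Once $\lambda_8=0$ is imposed, the norm equation becomes
\[
\lambda_2\lambda_7+\lambda_3\lambda_6+\lambda_4\lambda_5 = a,
\]
the coordinate $\lambda_1$ is free, and Lemma \ref{lem_Nka} with $k=3$ applies to the remaining six coordinates. Multiplying by the $q$ choices for $\lambda_1$ gives
\[
q\cdot N_3(a) = q\bigl(q^{5}-q^{2}+q^{3}\cdot[\![a=0]\!]\bigr) = q^{6}-q^{3}+q^{4}\cdot[\![a=0]\!],
\]
as claimed. The only non-mechanical step is the identification $\tr(x_1\overline{\alpha})=\lambda_8$, and even that is a one-line bilinear-form computation once one recalls $B(x,y)=\tr(x\overline{y})$; everything else is a direct substitution into Lemma \ref{lem_Nka}, so I do not anticipate any real obstacle.
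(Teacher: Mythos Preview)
Your proposal is correct and follows essentially the same route as the paper's proof: both invoke Lemma~\ref{lem_Nka} with $k=4$ for the first count, identify $\tr(x_1\overline{\alpha})=\lambda_8$, and then apply Lemma~\ref{lem_Nka} with $k=3$ (times the free $\lambda_1$) for the second. The only cosmetic difference is that you derive $\tr(x_1\overline{\alpha})=\lambda_8$ via the bilinear form identity $B(x,y)=\tr(x\overline{y})$, whereas the paper simply asserts it as a direct computation.
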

\begin{proof}
The first claim immediately follows by specifying $k=4$ in Lemma \ref{lem_Nka}. For an octonion $\alpha=\sum_{i=1}^8 \lambda_ix_i$, we have $\tr(x_1\overline{\alpha})=\lambda_8$ and $N(\alpha)=\sum_{i=1}^{4}\lambda_i\lambda_{9-i}$. If $\lambda_8=0$, then $N(\alpha)=\sum_{i=2}^{4}\lambda_i\lambda_{9-i}$. We deduce that there are $q\cdot N_3(a)$ such $\alpha$'s that satisfy $\lambda_8=0$ and  have norm $a$, where $N_3(a)$ is as in Lemma \ref{lem_Nka}. This completes the proof.
\end{proof}

\begin{lemma}\label{lem_DEpair}
There are $(q^6-q^3+q^4-1)(q^4-1)$ of $(D,E)$ pairs of nonzero octonions such that $DE=0$ and $\tr(x_1\overline{D})=0$.
\end{lemma}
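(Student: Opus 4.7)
The plan is to count by first fixing $D$ and then counting the possible $E$'s. First I would use the observation (from the paragraph just before Lemma~\ref{lem_condO}) that if $N(D)\ne 0$ then $\textup{ann}_R(D)=\{0\}$, so the existence of a nonzero $E$ with $DE=0$ forces $N(D)=0$. Conversely, for any nonzero $D$ with $N(D)=0$ one has $D\overline{D}=0$, so $\overline{D}$ is a witness for Lemma~\ref{lem_condO} being applicable, which then tells us that $\textup{ann}_R(D)$ is a $4$-dimensional subspace.

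Next I would count the admissible $D$'s. The constraint is that $D$ is a nonzero octonion with $N(D)=0$ and $\tr(x_1\overline{D})=0$. Applying Lemma~\ref{lem_ONorm} with $a=0$ gives exactly $q^6-q^3+q^4$ octonions (including zero) meeting both conditions, hence $q^6-q^3+q^4-1$ admissible nonzero choices of $D$.

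Then for each such $D$, the nonzero $E$'s with $DE=0$ are precisely the nonzero elements of the $4$-dimensional subspace $\textup{ann}_R(D)$, giving $q^4-1$ choices. Multiplying yields the desired count $(q^6-q^3+q^4-1)(q^4-1)$.

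I do not anticipate any real obstacle here: the statement is essentially a two-step combinatorial product where both factors have already been set up, one by Lemma~\ref{lem_ONorm} and one by Lemma~\ref{lem_condO}. The only point worth being careful about is justifying that $\dim\textup{ann}_R(D)=4$ uniformly over all nonzero singular $D$ (not just those for which a specific nonzero $E$ is already given), which is handled by the $E=\overline{D}$ trick noted above.
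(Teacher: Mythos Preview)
Your proposal is correct and follows essentially the same route as the paper: first restrict to nonzero singular $D$ with $\tr(x_1\overline{D})=0$ via Lemma~\ref{lem_ONorm}, then for each such $D$ count the $q^4-1$ nonzero $E$'s in the $4$-dimensional $\textup{ann}_R(D)$ via Lemma~\ref{lem_condO}. Your extra remark about using $E=\overline{D}$ to verify the hypothesis of Lemma~\ref{lem_condO} for every nonzero singular $D$ is a nice point of care that the paper leaves implicit.
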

\begin{proof}
If $DE=0$ for some nonzero $D,E$, then $D\overline{D}=0$ by Lemma \ref{lem_condO}. There are $q^6-q^3+q^4-1$ nonzero $D$'s such that  $D\overline{D}=0$ and $\tr(x_1\overline{D})=0$ by Lemma \ref{lem_ONorm}, and for each such $D$, there are $q^4-1$ nonzero $E$'s such that $DE=0$ by Lemma \ref{lem_condO}. The claim then follows.
\end{proof}

Let $\cA$ the the algebra of $3\times3$ Hermitian matrices over the octonions $\mathbb{O}$. To be specific, a $3\times 3$ matrix over $\mathbb{O}$ is an Hermitian matrix if $x^\top=\overline{x}$ and the diagonal entries of $x$ lie in $\F_q$. Here, $\overline{x}$ is the matrix obtained by applying the octonion conjugate to the entries of $x$. For $d,e,f\in\F_{q}$ and $D,E,F\in\mathbb{O}$, we define
\[
 (\lambda_0,\lambda_0',\lambda_0''\mid D,E,F):=\begin{pmatrix}
 	\lambda_0 & F  &  \overline{E} \\
\overline{F} & \lambda_0' &  D\\
 	E & \overline{D} &\lambda_0''
 \end{pmatrix}
\]
which is in $\cA$. For $u,v\in\cA$, their product in the algebra $\cA$ is $u\circ v:=uv+vu$, where $uv$ and $vu$ are matrix multiplications. We set $I:=(1,1,1\mid0,0,0)$, so that $I\circ a=2a$ for any element $a$ in $\cA$. The algebra $\cA$ has close connections with Albert algebras and Jordan algebras, cf. \cite{Mc_JA,Schaffer_JA}. We choose a basis $\{w_i,w_i',w_i'':\,0\le i\le 8\}$  of $\cA$ as follows:
\begin{align*}
  &w_0=(1,0,0\mid0,0,0)\textup{ and }w_i=(0,0,0\mid x_i,0,0)\textup{ for }i>0,\\
  &w_0'=(0,1,0\mid0,0,0)\textup{ and }w_i'=(0,0,0\mid 0,x_i,0)\textup{ for }i>0,\\
  &w_0''=(0,0,1\mid0,0,0)\textup{ and }w_i''=(0,0,0\mid 0,0,x_i)\textup{ for }i>0.
\end{align*}
The multiplication table of $\cA$ with respect to the basis $\{w_i,w_i',w_i'':\,0\le i\le 8\}$ can be written down explicitly as in \cite[(4.90)-(4.92)]{wilson2009finite}. The similarity group of the Dickson-Freudenthal determinant of $\cA$, which we denote by $\tilde{E}_6(q)$, is the universal covering group of the simple group $E_6(q)$, cf. \cite{wilsonF4}. It has three orbits on the nonzero vectors of $\cA$, which are called the white, gray and black vectors respectively, cf. \cite{Aschbacher1987,Cohen1988}. They correspond to vectors of rank $1$, rank $2$ and rank $3$ in \cite{Jacobson}. The $1$-dimensional subspace spanned by a white, gray, black vector is called a white, gray and black point respectively. The stabilizer of $I$ in $\tilde{E}_6(q)$ is $F_4(q)$.

For an element $v=\sum_{t=0}^8(\lambda_tw_t+\lambda_t'w_t'+\lambda_t''w_t'')\in \cA$,  we define the trace of $v$ as $\tr_{\cA}(v)=\lambda_0+\lambda_0'+\lambda_0''$. We define the $F_4(q)$-invariant subspaces $U=\{v\in \cA:\tr_{\cA}(v)=0\}$, $U'=\la I\ra_{\F_q}$, and set $W:=U/(U\cap U')$. For $v\in U$, we define
\[
  Q_0(v)=\lambda_0^2+\lambda_0\lambda_0'+\lambda_0'^2+\sum_{t=1}^4(\lambda_t\lambda_{9-t}+\lambda_t'\lambda_{9-t}'+\lambda_t''\lambda_{9-t}'').
\]
In particular, if $v=(\lambda_0,\lambda_0',\lambda_0''\mid D,E,F)$, then $Q_0(v)=\lambda_0^2+\lambda_0\lambda_0'+\lambda_0'^2+D\overline{D}+E\overline{E}+F\overline{F}$.
The quadratic form $Q_0$ on $U$ is $F_4(q)$-invariant and has $U\cap U'$ as its radical, so it induces a nondegenerate quadratic form $Q$ on $W$. The space $W$ is the minimal module of $F_4(q)$, and $\dim(W)=25$ or $26$ according as the characteristic $p$ of $\F_q$ is $3$ or not. The form $Q$ on $W$ is hyperbolic if $q\equiv 1\pmod{3}$ and is elliptic if $q\equiv 2\pmod{3}$.  The polar spaces in Theorem \ref{thm_main} are defined by the quadratic space $(W,Q)$.

\section{The proof of Theorem \ref{thm_main}}
In this section we give the proof of Theorem \ref{thm_main}. Take the same notation as in the previous section, and we consider the polar space defined by the quadratic space $(W,Q)$. If the characteristic $p$ of $\F_q$ is $3$, then the associated polar space is $Q(24,q)$ and the group $F_4(q)$ has two orbits on the singular points by \cite[Table~2]{Cohen1988}. One orbit $\cM_1$ has size $(q^4+1)\frac{q^{12}-1}{q-1}$ which is not divisible by the ovoid number $q^{12}+1$, so it is a $(q^4+1)$-tight set of $Q(24,q)$ as desired. We suppose that $q\equiv 2\pmod{3}$ in the sequel, so that $W=U$ and the associated polar space is $Q^-(25,q)$. Let $M_1$ be the set of white vectors in $U$, and write $\cM_1$ for the corresponding set of projective points. The set $M_1$ has size $(q^4+1)(q^{12}-1)$ and forms a single $F_4(q)$-orbit by \cite[(W.3)]{Cohen1988}. By \cite[Lemma~5]{wilsonF4}, we enumerate the white vectors in $U$ as follows:
\begin{enumerate}
\item[(I)] $v=f\cdot(A\overline{A},B\overline{B},1\mid\overline{B}, A,\overline{A}B)$ for some $f\in\F_q^*$ and $A,B\in\mathbb{O}$ such that $A\overline{A}+B\overline{B}+1=0$,
\item[(II)] $v=e\cdot(C\overline{C},1,0\mid A,\overline{CA},C)$ for some $e\in\F_q^*$  and octonions $A,C$ such that $A\overline{A}=0$, $C\overline{C}+1=0$,
\item[(III)] $v=(0,0,0\mid D,E,F)$, where $D,E,F$ are octonions such that
    \begin{equation}\label{eqn_IIIpt}
      D\overline{D}=E\overline{E}=F\overline{F}=0,\quad DE=EF=FD=0.
    \end{equation}
\end{enumerate}
In particular, $M_1$ is $\F_q^*$-invariant and it contains the singular vector $(0,0,0\mid x_1,0,0)$. It follows that $\cM_1$ has size $(q^4+1)\frac{q^{12}-1}{q-1}$ and consists of singular points of $Q^-(25,q)$.

We claim that $\cM_1$ is a $(q^4+1)$-tight set of $Q^-(25,q)$. By Lemma \ref{lem_basic} and the fact $\cM_1$ is single $F_4(q)$-orbit, it suffices to show that there are $q^{11}+(q^4+1)\frac{q^{11}-1}{q-1}$ points in $\cM_1$ that is perpendicular to $\la(0,0,0\mid x_1,0,0)\ra$. Suppose that $\la v\ra$ is such a point in $\cM_1$, and we examine the three types (I)-(III) one by one. If $v=(\lambda,\lambda',\lambda''\mid D,E,F)$, then the condition that $\la v\ra$ is perpendicular to $\la(0,0,0\mid x_1,0,0)\ra$ translates to $\tr(x_1\overline{D})=0$.

First consider the case where $\la v\ra$ is of type (I). The number of such points in $\cM_1$ that are perpendicular to $\la(0,0,0\mid x_1,0,0)\ra$ equals the number of $(A,B)$ pairs such that $A\overline{A}+B\overline{B}+1=0$, $\tr(x_1B)=0$. If we write $B=\sum_{i=1}^8 b_ix_i$, then $\tr(x_1B)=-b_8=0$ and $B\overline{B}=b_2b_7+b_3b_6+b_4b_5$. By Lemma \ref{lem_Nka}, we deduce that the number of such $(A,B)$ pairs is $q\cdot N_7(-1)=q^7(q^7-1)$. Here, $q$ is contributed by $b_1$ and $N_7(-1)$ is as defined in Lemma \ref{lem_Nka}.

Next consider the case where $\la v\ra$ is of type (II). The number of such points in $\cM_1$ perpendicular to $\la(0,0,0\mid x_1,0,0)\ra$ equals the number of $(A,C)$ pairs such that $C\overline{C}+1=0$, $A\overline{A}=0$ and $\tr(x_1\overline{A})=0$. By Lemma \ref{lem_ONorm}, it equals $(q^7-q^3)(q^6-q^3+q^4)$.

Finally, consider the case where $\la v\ra$ is of type (III), i.e., $v=(0,0,0\mid D,E,F)$ such that \eqref{eqn_IIIpt} holds and $\tr(x_1\overline{D})=0$. We observe that the conditions in \eqref{eqn_IIIpt} are invariant under the cyclic shift of $(D,E,F)$. We divide into the following three cases.
\begin{enumerate}
\item[(1)]Suppose that $D,E,F$ are all nonzero. There are $(q^6-q^3+q^4-1)(q^4-1)$ of $(D,E)$ pairs of nonzero octonions such that $DE=0$, $\tr(x_1\overline{D})=0$ by Lemma \ref{lem_DEpair}. For a pair $(D,E)$ of nonzero octonions such that $DE=0$, there are $q^3-1$ nonzero $F$'s such that $EF=FD=0$ by Lemma \ref{lem_ONorm}, and we deduce that $D\overline{D}=E\overline{E}=F\overline{F}=0$ for such $D,E,F$ by Lemma \ref{lem_condO}. This case contributes $\frac{q^4-1}{q-1}(q^6-q^3+q^4-1)(q^3-1)$.
\item[(2)]Suppose that exactly one of $D,E,F$ is zero. If $F=0$, then \eqref{eqn_IIIpt} reduces to $DE=0$ by Lemma \ref{lem_condO}. There are $(q^6-q^3+q^4-1)(q^4-1)$ of $(D,E)$ pairs of nonzero octonions such that $DE=0$, $\tr(x_1\overline{D})=0$ by Lemma \ref{lem_DEpair}. By symmetry, we obtain the same number if $E=0$. If $D=0$, then  \eqref{eqn_IIIpt} reduces to $EF=0$ and similarly there are $(q^3+1)(q^4-1)^2$ of such $(E,F)$ pairs of nonzero octonions that $EF=0$ by Lemmas \ref{lem_condO} and \ref{lem_ONorm}. To sum up, this case contributes $\frac{q^4-1}{q-1}(2(q^6-q^3+q^4-1)+(q^3+1)(q^4-1))$.
\item[(3)]Suppose that exactly one of $D,E,F$ is nonzero. If $E=F=0$, then \eqref{eqn_IIIpt} reduces to $D\overline{D}=0$. There are $q^6-q^3+q^4-1$ nonzero $D$'s such that $D\overline{D}=0$ and $\tr(x_1\overline{D})=0$ by Lemma \ref{lem_ONorm}. If $D=E=0$, then \eqref{eqn_IIIpt} reduces to $F\overline{F}=0$ and there are $(q^3+1)(q^4-1)$ such nonzero $F$'s. By symmetry we obtain the same number if $D=F=0$. To sum up, this case contributes $\frac{1}{q-1}\left(q^6-q^3+q^4-1+2(q^3+1)(q^4-1)\right)$.
\end{enumerate}
By adding up all the numbers, we deduce that there are exactly $q^{11}+(q^4+1)\frac{q^{11}-1}{q-1}$ points in $\cM_1$ that is perpendicular to $\la(0,0,0\mid x_1,0,0)\ra$. This completes the proof of Theorem \ref{thm_main}.

\begin{remark}\label{remark_orbits}
Suppose that the characteristic $p$ of $\F_q$ is not $3$. Take the same notation as in Section 2, and let $K$ be the normalizer of $F_4(q)$ in the normalizer of the quadratic form $Q_0$ on $U$. The vectors $x=w_1$, $y=w_4+w_4'$, $z=w_0-w_0''+w_4-w_5$ are singular vectors of $Q_0$ in $U$, and they are stabilized by the Frobenius automorphism that raises the coefficients of a vector with respect to the basis $\{w_i,w_i',w_i'':\,0\le i\le 8\}$ to their $p$-th powers. Moreover, they are white, gray and black vectors respectively and lie in distinct $\tilde{E}_6(q)$-orbits. We conclude that $K$ has at least three orbits on the singular points of the polar space associated with $(U,Q_0)$. When $p=2$, we verify with Magma \cite{magma} that there are exactly three $F_4(q)$-orbits on the singular point and each orbit form a tight set. It is not clear whether this holds for further values of $q$ such that $q\equiv 2\pmod{3}$, and we leave it as an open problem.
\end{remark}


\bibliographystyle{plain}

\end{document}